\newcommand{\cB}{\mathcal{B}}
\newcommand{\cC}{\mathcal{C}}
\newcommand{\cD}{\mathcal{D}}
\newcommand{\cE}{\mathcal{E}}
\newcommand{\cF}{\mathcal{F}}
\newcommand{\cX}{\mathcal{X}}
\newcommand{\cZ}{\mathcal{Z}}
\newcommand{\N}{\mathbb{N}}
\newcommand{\R}[1]{\mathbb{R}^{#1}}
\newcommand{\de}{\mathrm{d}}
\newcommand{\diam}{\operatorname{diam}}
\newcommand{\dist}{\operatorname{dist}}
\def \oea {\Omega_\eps(a)}
\def \euno {\eps_1}
\def \edue {\eps_2}
\newcommand{\eps}{\varepsilon}
\newcommand{\rhobar}{\bar\rho}
\newcommand{\Om}{\Omega}
\renewcommand{\div}{\operatorname{div}}
\newcommand{\curl}{\operatorname{curl}}
\newtheorem{defin}{Definition}[section]
\newtheorem{theorem}[defin]{Theorem}
\newtheorem{corollary}[defin]{Corollary}
\title{QUALITATIVE AND QUANTITATIVE PROPERTIES OF THE DYNAMICS OF SCREW DISLOCATIONS}
\author{Marco Morandotti$^1$}
\address{$^1$Technische Universit\"at M\"unchen \\
  Boltzmannstrasse, 3, 85748 Garching b.\@ M\"unchen, Germany\\
  e-mail: marco.morandotti@ma.tum.de}
\keywords{screw dislocations, collision times, core radius approach, confinement.}
\abstract{This note collects some results on the behaviour of screw dislocation in an elastic medium.
By using a semi-discrete model, we are able to investigate two specific aspects of the dynamics, namely (i) the interaction with free boundaries and collision events and (ii) the confinement inside the domain when a suitable Dirichlet-type boundary condition is imposed.

In the first case, we analytically prove that free boundaries attract dislocations and we provide an expression for the Peach--Koehler force on a dislocation near the boundary. 
Moreover, we use this to prove an upper bound on the collision time of a dislocation with the boundary, provided certain geometric conditions are satisfied.
An upper bound on the collision time for two dislocations with opposite Burgers vectors hitting each other is also obtained.

In the second case, we turn to domains whose boundaries are subject to an external stress.
In this situation, we prove that dislocations find it energetically favourable to stay confined inside the material instead of getting closer to the boundary.
The result first proved for a single dislocation in the material is extended to a system of many dislocations, for which the analysis requires the careful treatments of the interaction terms.
}
\begin{document}

\section{INTRODUCTION}

When undergoing stress and deformations, crystalline materials can exhibit a breakdown of the lattice in which their atoms are arranged.
Among such failures of the perfectly ordered structure, dislocations are particularly relevant because their presence can significantly alter the mechanical and the physical and chemical properties of a material.
In particular, they are considered to be responsible for the plastic behaviour of materials.
Starting from the pioneering work by Volterra \cite{volterra}, the mechanism for plasticity due to dislocations was suggested simultaneously and independently by Taylor \cite{taylor}, Orowan \cite{orowan}, and Polanyi \cite{polanyi} in 1934 (see also \cite{GLP} for a rigorous proof), but it was not until 1956 that dislocations were observed experimentally \cite{HHW}.
In more recent times, the theory of dislocations has captured the attention of scientists, bridging together the applied community of physicists and engineers and the theoretical one of mathematicians.
For general treaties on dislocations, we refer the reader to \cite{nabarro,HL,HB}.

Dislocations are line defects in the crystalline structure of solids and the lattice mismatch they generate is measured by the so-called \emph{Burgers vector}.
According to whether the Burgers vector is perpendicular or parallel to the dislocation line, dislocations are called \emph{edge} or \emph{screw}, respectively.
We will restrict our attention to screw dislocations only, in a material undergoing an antiplane shear deformation.
In this case, the idealised setting is an infinite cylinder $\Omega\times\mathbb{R}$, the cross--section $\Omega$ being a subset of $\mathbb{R}^2$, in which the dislocations are lines parallel to the vertical axis, meeting $\Omega$ at a discrete set of points $\cZ=\{z_1,\ldots,z_n\}$.
Due to the modelling assumption of antiplane deformation, the Burgers vectors are all parallel to the vertical axis as well and can be identified by the \emph{Burgers moduli} $\cB=\{b_1,\ldots,b_n\}$.
It is not restrictive to assume (see \cite{HB}) that $b_i\in\{\pm1\}$ for all $i=1,\ldots,n$.

The contribution contained in the present note relies on the celebrated model for screw dislocations undergoing antiplane shear proposed by Cermelli and Gurtin in 1999 \cite{CG}.
Cermelli and Gurtin's is generally referred to as a \emph{semi-discrete} model, since it provides a link between the elastic behaviour of the body, occurring at the mesoscopic scale, and the dynamics of each dislocation, occurring at the atomistic, microscopic scale.
The link between the two scales is provided by the law of motions of the dislocations, according to which the Peach--Koehler force acting on each dislocation can be obtained (as it will be more clear later) from the renormalised energy of the whole system.

A strain field $h$ associated to the system of screw dislocations $\cZ$ with Burgers vectors $\cB e_3$ satisfies the system
\begin{equation}\label{100}
\begin{cases}
\div h=0 & \text{in $\Omega$} \\
\curl h=\sum_{i=1}^n b_i\delta_{z_i} & \text{in $\Omega$}
\end{cases}
\end{equation}
in the sense of distributions.
Here, $h\colon\Omega\to\R2$ is the strain field of the deformed body, $\delta_z$ is the Dirac delta function at the point $z\in\Omega$, and $\curl h=\partial h_2/\partial x_1-\partial h_1/\partial x_2$ is the scalar curl of the field $h$.
As a consequence of the second equation in \eqref{100}, the Burgers moduli $b_i$ can be obtained by integrating the tangential component of $h$ around a closed, simple loop $\gamma_i$ containing the dislocation $z_i$ only, namely
\begin{equation}\label{102}
b_i=\int_{\gamma_i} h\cdot t\,\de s,
\end{equation}
where $t$ is the tangent vector, and $\de s$ is the line element.
The fact that $h$ is singular at the dislocations positions is expressed by the curl equation in \eqref{100}, from which we see that $h$ fails to be the gradient of a deformation (a more regular function).
If that were the case, we would have that there exists a function $u\in H^1(\Omega)$ such that $h=\nabla u$, so that the first equation in \eqref{100} would be the more familiar equilibrium equation for elasticity, $\Delta u=0$ (considering the elastic tensor to be the identity).
The elastic energy associated with the material is given by
\begin{equation}\label{101}
\cE(h):=\frac12\int_\Omega |h(x)|^2\,\de x.
\end{equation}
It is easy to see that the energy $\cE(h)$ is not finite; indeed, consider $n=1$ and denote by $B_\eps(z)$ the open ball of radius $\eps>0$ centred at $z$.
We have that 
\begin{equation}\label{103}
\cE(h)=\lim_{\eps\to0} \frac12\int_{\Omega\setminus\overline B_\eps(z)} |h(x)|^2\,\de x=\lim_{\eps\to0} \pi|\log\eps|+O(1)=+\infty.
\end{equation}
It is usual to resort to the so-called \emph{core radius approach} to tackle the problem: cores $B_\eps(z_i)$ of radius $\eps$ are removed around each dislocation $z_i$ and the energy minimisation problem is studied in the perforated domain $\Omega_\eps:=\Omega\setminus\big(\cup_{i=1}^n \overline B_\eps(z_i)\big)$, and then a suitable limit as $\eps\to0$ is taken.
This allows to single out the logarithmic contribution to the elastic energy and to introduce the \emph{renormalised energy} $\cE_n\colon\Omega^n\to\R{}\cup\{+\infty\}$, which only depends on the dislocations positions.
Denoting by $\cE_\eps$ the energy \eqref{101} defined on the perforated domain $\Omega_\eps$, we have (see \cite{BFLM}) that at a minimiser $h_\eps$ the energy can be written as 
\begin{equation}\label{104}
\cE_\eps(h_\eps)=\frac12\int_{\Omega_\eps} |h_\eps(x)|^2\,\de x=C|\log\eps|+\cE_n(z_1,\ldots,z_n)+O(\eps),
\end{equation}
where $C>0$ is a constant that depends on the Burgers vectors.

Analogously to the theory of Ginzburg-Landau vortices \cite{BBH,SS}, the renormalised energy $\cE_n$ is the key quantity to study the dynamics of screw dislocations as well as to obtain information about the positions of the dislocations.
In the following, we will give two different expressions of $\cE_n(z_1,\ldots,z_n)$, according to the type of problem we are going to treat.

This note is organised as follows.
In Section \ref{tom} we will present the results contained in \cite{HM}.
By writing the renormalised energy $\cE_n(z_1,\ldots,z_n)$ of \eqref{104} appropriately in terms of the Robin's function for the laplacian, we obtain
\begin{itemize}
\item[(a)] a precise estimate on the Peach--Koehler force on a dislocation near a free boundary;
\item[(b)] estimates on the collision time of a dislocation near the boundary;
\item[(c)] estimates on the collision time of two dislocations with opposite Burgers moduli.
\end{itemize}
The result in (a) proves analytically that free boundaries attract dislocations; the estimates in (b) and (c) are sharp, as it is proved by some simple case experiments, see \cite[Section 4]{HM}.
To achieve (a), estimates on the gradient of the Green's function for the Laplacian are needed; (a) will in turn be used to prove (b).
The same strategy to prove (b) is used to obtain (c).
The details can be found in \cite{HM}.

In Section \ref{lmsz} we will present the results contained in \cite{LMSZ}: by imposing a suitable boundary condition of Dirichlet type and by indicating with $\cE_\eps(a)$ the position-dependent minimiser of $\cE_\eps$ among a certain class of admissible deformations, we show that
\begin{itemize}
\item[(d)] the minimiser of the position-dependent renormalised energy $\cE_1(a)$ of (the right-hand side of) \eqref{104} for one single dislocation is in the interior of the domain;
\end{itemize}
and that, considering $n$ dislocations $a_1,\ldots,a_n\in\Omega$ with Burgers moduli of the same sign,
\begin{itemize}
\item[(e)] the minimiser of the position-dependent renormalised energy $\cE_n(a_1,\ldots,a_n)$ of (the right-hand side of) \eqref{104} for $n$ dislocations is a configuration of dislocations all distinct and all contained in the interior of the domain.
\end{itemize}
Result (d) is stated in terms of convergence of the functionals $\cF_\eps\colon\overline\Omega\to\R{}\cup\{+\infty\}$ defined by $\cF(a):=\cE_\eps(a)-\pi|\log\eps|$ to a functional $\cF\colon\overline\Omega\to\R{}\cup\{+\infty\}$ which attains its minimum in the interior of $\Omega$; analogously, (e) is obtained by showing that the functionals $\cF_\eps\colon\overline\Omega{}^n\to\R{}\cup\{+\infty\}$ defined by $\cF(a_1,\ldots,a_n):=\cE_\eps(a_1,\ldots,a_n)-\pi n|\log\eps|$ converge to a functional $\cF\colon\overline\Omega{}^n\to\R{}\cup\{+\infty\}$ which attains its minimum in the interior of $\Omega$.

The appropriate assumptions on the domain are stated at the beginning of each section and are assumed to hold throughout the same section.

Section \ref{conclusions} collects some final comments and remarks, including possible future directions.
The results presented here have been obtained in collaboration with T.\@ Hudson \cite{HM} and with I.\@ Lucardesi, R.\@ Scala, and D.\@ Zucco \cite{LMSZ}.

\section{BEHAVIOUR NEAR THE BOUNDARY AND ESTIMATES ON COLLISION TIMES}\label{tom}
In this section, we assume that the cross--section $\Omega\subset\R2$ is a connected open set with $C^2$ boundary.
This regularity assumption implies that the boundary satisfies \emph{uniform interior and exterior disk conditions}: there exists $\rhobar>0$ such that for any point $x\in\partial\Omega$, there exist unique points $x_\text{int}$ and $x_\text{ext}$ such that
\begin{equation}\label{1011}
  B_{\rhobar}(x_\text{int})\subseteq \Om,\quad
  B_{\rhobar}(x_\text{ext})\subset \Om^c\quad\text{and}\quad
  \partial B_{\rhobar}(x_\text{int})\cap\partial\Om\cap
  \partial B_{\rhobar}(x_\text{ext}) = \{x\},
\end{equation}
where the superscript $c$ denotes the complement of a set. 
We shall fix such $\rhobar>0$ once and for all.
It also follows that the curvature of the boundary, $\kappa$, lies in $C(\partial\Om)$, and $\|\kappa\|_{\infty}\leq\rhobar^{-1}$. 

The Green's function of the Laplacian with Dirichlet boundary conditions on $\partial\Om$ is the (distributional) solution to
\begin{equation}\label{eq:G_Om_problem}
\begin{cases}
  -\Delta_x G_\Om(x,y) = \delta_y(x) & \text{in }\Om, \\
  G_\Om(x,y) = 0 & \text{on }\partial\Om.
\end{cases}
\end{equation}
It is a classical result that $G_\Om$ is smooth in the variable $x$ on the set $\Om\setminus\{y\}$ for any
given $y\in\Om$; is symmetric, i.e. $G_\Om(x,y) = G_\Om(y,x)$; and 
\begin{equation}\label{eq:Green}
  G_\Om(x,y) = -\frac1{2\pi}\log|x-y|+ k_\Om(x,y),
\end{equation}
where $k_\Om(x,y)$ is smooth in both arguments on $\Om$, is symmetric, and satisfies the elliptic boundary value problem
\begin{equation}\label{eq:k_Om_problem}
\begin{cases}
  -\Delta_x k_\Om(x,y) = 0 & \text{in }\Om, \\
  k_\Om(x,y) = \frac1{2\pi}\log|x-y| &\text{on }\partial\Om;
\end{cases}
\end{equation}
proofs of all of the above assertions may be found in \cite[Chapter~4]{H14}.
In addition, we also define 
\begin{equation}\label{eq:h_Om}
h_\Om(x):=k_\Om(x,x),
\end{equation}
which will turn out to be a convenient function with which to express the renormalised energy.
It can be shown that $h_\Om$ satisfies the elliptic problem (see \cite{CF85})
\begin{equation}
  -\Delta_x h_\Om(x) = \frac2\pi \mathrm{e}^{-4\pi h_\Om(x)}\quad
  \text{for all }x\in\Om.\label{eq:h_Om_problem}
\end{equation}

Using the explicit expression for the Green's function \eqref{eq:Green} and the functions defined in \eqref{eq:k_Om_problem} and \eqref{eq:h_Om}, the renormalised energy $\cE_n$ from the right-hand side of \eqref{104} of $n$ dislocations with positions $z_1,\ldots,z_n\in\Om$ and Burgers moduli $b_1,\ldots,b_n\in\{-1,+1\}$ (see, \emph{e.g.}, \cite{CG,ADLGP,BM}) may be expressed as
\begin{equation}\label{renen}
  \cE_n(z_1,\ldots,z_n)= 
  \sum_{i<j}b_ib_j\Big(k_\Omega(z_i,z_j)-\frac1{2\pi}\log|z_i-z_j|\Big) + \frac12 \sum_{i=1}^n b_i^2 h_\Omega(z_i),
\end{equation}
where the contributions of the two--body interaction terms and the one--body `self--interaction' term are highlighted.
To be more precise, 
\begin{itemize}
\item each term $h_\Om(z_i)$ is the contribution to the energy given by a dislocation sitting at $z_i$; 
\item the logarithmic terms $\log|z_i-z_j|$ account for the interaction energy of the two dislocations sitting at $z_i$ and $z_j$; 
\item the term $k_\Om(z_i,z_j)$ accounts for the interaction of the dislocation sitting at $z_i$ with the boundary response due to the dislocation sitting at $z_j$.
\end{itemize}
It is worth noticing that the interaction terms also involve the product $b_ib_j$ of the Burgers
moduli of the dislocations in a fashion similar to that of electric charges: $b_i=b_j=\pm1$ gives a positive
contribution to the energy, and tends to push two dislocations with the same sign far away from each other.
dislocations.
Finally, observe that the terms with the subscript $\Om$ depend in a crucial way on the geometry of the domain
and carry information about the interaction with the boundary.
To be thorough, the energy $\cE_n$ in \eqref{renen} should also depend on the Burgers moduli $b_1,\ldots,b_n$, but we assume these are attached to the dislocations and do not vary in time. 

The force on a dislocation, the so-called Peach--Koehler force \cite{HL}, is obtained by taking the negative of the gradient of the renormalised energy with respect to the dislocation position
\begin{equation}\label{502}
f_i(z_1,\ldots,z_n)=-\nabla_{z_i}\cE_n(z_1,\ldots,z_n),\qquad\text{for $i=1,\ldots,n$.}
\end{equation}
The subscript $i$ refers to the force experienced by the dislocation at $z_i$ and the dependence on the whole configuration of dislocations $z_1,\ldots,z_n$ highlights its nonlocal character. 

The law describing the dynamics of the dislocations is therefore expressed as
\begin{equation}\label{500}
\dot z_i(t)=-\nabla_{z_i}\cE_n(z_1(t),\ldots,z_n(t)),\qquad\text{for $i=1,\ldots,n$,}
\end{equation}
complemented with suitable initial condition at time $t=0$.

Let $d_n\colon\Omega^n\to[0,+\infty)$ be defined by 
\begin{equation}\label{1001}
  d_n(x_1,\ldots,x_n):=
  \begin{cases}
    \dist(x_1,\partial\Om) & n=1,\\
    \min_i\dist(x_i,\partial\Om)\wedge \min_{i\neq j}|x_i-x_j| &\text{otherwise}.
  \end{cases}
\end{equation}
In the case $n=1$, the function $d_1$ measures the distance of the dislocation from the boundary $\partial\Omega$, whereas, if $n\geq2$, $d_n$ describes the minimal separation among the dislocations and their distance from the boundary.

The situation we consider for addressing problems (a) and (b) of the introduction is the following: we suppose that we have $n\in\N$ dislocations in $\Omega$ one of which, $z_1$, is much closer to the boundary $\partial\Omega$ than the others; we also suppose that the other $n-1$ dislocations, $z_2,\ldots,z_n$, are spaced sufficiently far apart from each other and from the boundary.
We introduce the notation $z':=(z_2,\ldots,z_n)$ so that
the configuration of the $n$ dislocations can be represented by the vector $z:=(z_1,z')\in\Omega^n$.
Given $0<\delta<\gamma<\diam\Omega/2$, define the set
\begin{equation}\label{eq:Geomofz}
  \cD_{n,\delta,\gamma}:=\{(z_1,z')\in\Om^n : d_1(z_1)<\delta,d_{n-1}(z')>\gamma\}.
\end{equation}
The geometric meaning of the set $ \cD_{n,\delta,\gamma}$ defined above is the following: if $z\in \cD_{n,\delta,\gamma}$, it means that $z_1$ lies at a distance of at most $\delta$ from the boundary, while all the other dislocations $z_2,\ldots,z_n$ lie at a distance of at least $\gamma$ away from the boundary and their mutual distance is also at least $\gamma$.
The condition $\delta<\gamma$ ensured that $z_1$ is closer to the boundary than any other dislocation.

In the following theorem we show that the Peach--Koehler force acting on a dislocation which is very close to the boundary is directed along the outward unit normal at the boundary point closest to the dislocation.
\begin{theorem}[{(a) -- free boundaries attract dislocations \cite{HM}}]\label{thm:fatal}
Let $n\in\N$, let $\sigma\in(0,1)$, recall the definition of $\rhobar$ from \eqref{1011}, and let $\delta\in(0,\sigma\rhobar)$ and $\gamma\in(\max\{2\delta,\rhobar\},\diam\Omega/2)$.
Let $z=(z_1,z')\in\cD_{n,\delta,\gamma}$.
Then, if $s\in\partial\Omega$ is the boundary point closest to $z_1$, the Peach-Koehler force $f_1(z)$ on the dislocation $z_1$ (see \eqref{502}) satisfies
\begin{equation}\label{505}
f_1(z)=\frac{\nu(s)}{4\pi d_1(z_1)}+\frac{C_{n,\sigma}(\gamma)}{2\pi\rhobar},
\end{equation}
where $\nu(s)$ is the outward unit normal to $\partial\Omega$ at $s$ and the constant $C_{n,\sigma}(\gamma)>0$ only depends on the geometric parameter $\rhobar$, on $\sigma\in(0,1)$, and on how far all the other dislocations are from $z_1$ and from $\partial\Omega$.
\end{theorem}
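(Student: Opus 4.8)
The plan is to compute $f_1(z)=-\nabla_{z_1}\cE_n(z)$ directly from the explicit renormalised energy \eqref{renen}, isolate the one term that becomes singular as $z_1$ approaches $\partial\Om$, and show that it produces the leading contribution $\nu(s)/(4\pi d_1(z_1))$ while everything else stays bounded by $C_{n,\sigma}(\gamma)/(2\pi\rhobar)$. Differentiating \eqref{renen} in $z_1$ and using $b_1^2=1$, only the self-interaction term $\tfrac12 h_\Om(z_1)$ and the pairwise terms with $j\ge 2$ survive, so
\[
f_1(z)=-\tfrac12\nabla h_\Om(z_1)-\sum_{j=2}^n b_1 b_j\Big(\nabla_{z_1}k_\Om(z_1,z_j)-\frac1{2\pi}\frac{z_1-z_j}{|z_1-z_j|^2}\Big).
\]
Since $k_\Om$ is symmetric and $h_\Om(x)=k_\Om(x,x)$, I would record the identity $\nabla h_\Om(x)=2\,\nabla_1 k_\Om(x,x)$ (the gradient of the diagonal equals twice the first-slot gradient evaluated on the diagonal), which reduces the whole analysis to the behaviour of $\nabla_1 k_\Om$.

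Second, I would bound the pairwise sum and absorb it into the error term. Because $z\in\cD_{n,\delta,\gamma}$ forces $d_1(z_1)<\delta$ and $\dist(z_j,\partial\Om)\ge\gamma$, one gets $|z_1-z_j|\ge\gamma-\delta\ge\gamma/2$, so each explicit term $\tfrac1{2\pi}(z_1-z_j)/|z_1-z_j|^2$ is bounded by $1/(\pi\gamma)$. For the terms $\nabla_{z_1}k_\Om(z_1,z_j)$ the key point is that the \emph{second} argument stays at distance $\ge\gamma$ from $\partial\Om$: modelling $k_\Om$ on the reflection (image) kernel shows that its effective singularity sits at the reflected point of $z_j$, which is at distance $\gtrsim\gamma$ from $z_1$, so interior gradient estimates for the harmonic function $k_\Om(\cdot,z_j)$ give a bound depending only on $\gamma$ and $\rhobar$. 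Collecting these $n-1$ bounded contributions yields the constant $C_{n,\sigma}(\gamma)$.

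Third — and this is the crux — I would establish the gradient asymptotics of the Robin function, namely $\nabla h_\Om(z_1)=-\nu(s)/(2\pi d_1(z_1))+O(1/\rhobar)$, so that $-\tfrac12\nabla h_\Om(z_1)=\nu(s)/(4\pi d_1(z_1))+O(1/\rhobar)$. The model is the interior osculating disk $D^-:=B_{\rhobar}(x_{\mathrm{int}})\subseteq\Om$ furnished by \eqref{1011}, for which the Green's function is explicit and $h_{D^-}(x)=\tfrac1{2\pi}\log\!\big((\rhobar^2-|x-x_{\mathrm{int}}|^2)/\rhobar\big)$; a direct differentiation gives
\[
\nabla h_{D^-}(z_1)=\frac{-\nu(s)}{2\pi d_1(z_1)}+\frac{\nu(s)}{2\pi(2\rhobar-d_1(z_1))}=\frac{-\nu(s)}{2\pi d_1(z_1)}+O(1/\rhobar),
\]
where $\nu(s)$ is the outward radial direction at the near-boundary point and the $O(1/\rhobar)$ is exactly the curvature correction. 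Domain monotonicity of the Green's function ($D^-\subseteq\Om$) gives the one-sided comparison $h_{D^-}\le h_\Om$, and the matching upper bound follows from the local exterior-disk geometry of \eqref{1011}; the uniform curvature bound $\|\kappa\|_\infty\le\rhobar^{-1}$ is what keeps all these errors at order $1/\rhobar$.

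The main obstacle is the passage from such $C^0$ comparison to a $C^1$ (gradient) statement with a quantitative, $\rhobar$-uniform remainder. Concretely, I would set $w:=G_\Om-G_{D^-}=k_\Om-k_{D^-}$, which is harmonic in $x$ on $D^-$ and satisfies $w=G_\Om\ge0$ on $\partial D^-$, so its size there measures the discrepancy between $\partial\Om$ and $\partial D^-$ near $s$. Since $z_1\in D^-$ lies at distance $d_1(z_1)$ from $\partial D^-$, the ball $B_{d_1(z_1)}(z_1)\subseteq D^-$ and one can convert a fine bound on $w$ near $s$ (of the order $d_1(z_1)/\rhobar$ dictated by the curvature) into $O(1/\rhobar)$ control of $\nabla w(z_1)$ via interior/boundary elliptic estimates. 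Obtaining this sharp geometric bound on $w$ and its gradient, uniformly in the region $\cD_{n,\delta,\gamma}$, is the delicate technical point; once it is in hand, combining the three steps yields \eqref{505}.
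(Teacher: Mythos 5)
You are following the route that the paper itself indicates for Theorem \ref{thm:fatal}: the note contains no self-contained proof (it says only that ``estimates on the gradient of the Green's function for the Laplacian are needed'' and defers all details to \cite{HM}), and your scheme---differentiate \eqref{renen}, reduce everything to $\nabla h_\Om$ via $\nabla h_\Om(x)=2\nabla_1 k_\Om(x,x)$, and extract the image force by comparison with the osculating disks of \eqref{1011}---is exactly that strategy. The parts you actually compute are correct: the expression for $f_1$, the lower bound $|z_1-z_j|\ge\gamma-\delta\ge\gamma/2$, the disk Robin function $h_{D^-}$ and its gradient, and the resulting splitting into $\nu(s)/(4\pi d_1(z_1))$ plus a curvature-sized remainder.

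One step, however, would fail as justified, and it is the same difficulty you leave open at your declared crux, so it needs a concrete fix. For the interaction terms you invoke ``interior gradient estimates for the harmonic function $k_\Om(\cdot,z_j)$''; but $z_1$ lies at distance $d_1(z_1)<\delta$ from $\partial\Om$, so any interior estimate at $z_1$ carries the factor $1/d_1(z_1)$, which blows up precisely in the regime of the theorem, and Schwarz reflection (which would make your ``image singularity'' picture literal) is not available for a general $C^2$ boundary. The uniform bound is true, but it must be obtained by the following mechanism, which also closes your crux. For $u\ge0$ harmonic on $B_r(x_0)$, the mean value property and the divergence theorem give $|\nabla u(x_0)|\le\frac2r u(x_0)$. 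Apply this on $B_{d_1(z_1)}(z_1)$: first to $u=G_\Om(\cdot,z_j)$, whose value at $z_1$ is at most $C(\gamma,\rhobar)\,d_1(z_1)$ by monotone comparison with the exterior-disk domain $D^+:=\R2\setminus\overline B_{\rhobar}(x_{\mathrm{ext}})$ (whose Green's function is explicit and vanishes linearly at $s$), so that $|\nabla_1 G_\Om(z_1,z_j)|\le C(\gamma,\rhobar)$ and hence $\nabla_1k_\Om(z_1,z_j)=\nabla_1G_\Om(z_1,z_j)+\frac1{2\pi}\frac{z_1-z_j}{|z_1-z_j|^2}$ is uniformly bounded; second to $u=w=k_\Om(\cdot,z_1)-k_{D^-}(\cdot,z_1)$, whose value you should bound not ``near $s$'' on $\partial D^-$ (where $w=G_\Om(\cdot,z_1)$ is not small in the needed way without further work) but on the diagonal, via the two-sided sandwich $h_{D^-}\le h_\Om\le h_{D^+}$:
\[
0\le w(z_1)=h_\Om(z_1)-h_{D^-}(z_1)\le h_{D^+}(z_1)-h_{D^-}(z_1)=\frac1{2\pi}\log\frac{2\rhobar+d_1(z_1)}{2\rhobar-d_1(z_1)}\le\frac{d_1(z_1)}{\pi(2-\sigma)\rhobar},
\]
where the hypothesis $\delta<\sigma\rhobar$ enters; then $|\nabla w(z_1)|\le\frac2{d_1(z_1)}w(z_1)\le\frac{2}{\pi(2-\sigma)\rhobar}$, which is exactly the $O(1/\rhobar)$ control you need, since $\nabla h_\Om(z_1)-\nabla h_{D^-}(z_1)=2\nabla w(z_1)$. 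With these two repairs your three steps do yield \eqref{505}; whether this coincides step by step with the argument of \cite{HM} cannot be checked against this note, which reproduces none of it.
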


We want to find conditions on the parameters $\delta$ and $\gamma$ in \eqref{eq:Geomofz}, in order to strengthen the constraint $\delta<\gamma$ in such a way that if the initial configuration of the system $z(0)\in\cD_{n,\delta_0,\gamma_0}$, for some $\delta_0<\gamma_0$, then $z_1$ will collide with the boundary before any other collision event occurs.

\begin{theorem}[{(b) -- collision with the boundary \cite{HM}}]\label{thm:boundary_collision}
Let $n\in\N$, let $\sigma\in(0,1)$, $\gamma_0>0$, and consider $\rhobar$ from \eqref{1011}.
There exist $\delta_0>0$ such that, if $z(0)\in\cD_{n,\delta_0,\gamma_0}$, then there exists $T_{\mathrm{coll}}^{\partial\Om}>0$ such that the evolution $z(t)$ is defined for $t\in[0,T_{\mathrm{coll}}^{\partial\Om}]$, 
$z(t)\in\Omega^n$ for $t\in[0,T_{\mathrm{coll}}^{\partial\Om})$, and $z_1(T_{\mathrm{coll}}^{\partial\Om})\in\partial\Omega$ and $z'(T_{\mathrm{coll}}^{\partial\Om})\in\Omega^{n-1}$.
Furthermore, as $\delta_0\to0$, the following estimate holds
\begin{equation}\label{eq:ubht}
  T_{\mathrm{coll}}^{\partial\Om}\leq2\pi\delta_0^2+O(\delta_0^3).
  \end{equation}
\end{theorem}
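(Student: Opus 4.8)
The plan is to reduce the collision of $z_1$ with the boundary to a scalar ordinary differential inequality for the distance $d(t):=d_1(z_1(t))=\dist(z_1(t),\partial\Om)$ and to integrate it. By the dynamics \eqref{500}, the velocity of the dislocation closest to the boundary is exactly the Peach--Koehler force, $\dot z_1(t)=f_1(z(t))$. As long as $z_1$ lies in the tubular neighbourhood $\{d_1<\rhobar\}$, the interior disk condition \eqref{1011} guarantees that the nearest boundary point $s=s(z_1)$ is unique and that $d_1$ is differentiable there with $\nabla d_1(z_1)=-\nu(s)$. Writing the remainder in \eqref{505} as $R(t)$ with $|R(t)|\le C_{n,\sigma}(\gamma)/(2\pi\rhobar)$ and using Theorem \ref{thm:fatal}, I would differentiate to get
\begin{equation}
\dot d(t)=-\nu(s)\cdot f_1(z(t))=-\frac1{4\pi d(t)}-\nu(s)\cdot R(t)\le-\frac1{4\pi d(t)}+\frac{C}{2\pi\rhobar},
\end{equation}
where $C:=C_{n,\sigma}(\gamma)$. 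In particular $\dot d<0$ as soon as $d<\rhobar/(2C)$, which holds once $\delta_0$ is small since $d(0)<\delta_0$; hence $d$ is strictly decreasing and $z_1$ moves monotonically towards $s$.

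Second, I would convert this inequality into the time bound by comparison. Since $\dot d<0$, I can change variables $t\mapsto d$ and integrate the reciprocal inequality from $d=0$ up to $d=d(0)$, which gives
\begin{equation}
T_{\mathrm{coll}}^{\partial\Om}\le\int_0^{d(0)}\frac{\de r}{\frac1{4\pi r}-\frac{C}{2\pi\rhobar}}=\int_0^{d(0)}\frac{4\pi r}{1-\frac{2C}{\rhobar}r}\,\de r.
\end{equation}
For $d(0)<\delta_0$ small the denominator is close to $1$; expanding $\big(1-\tfrac{2C}{\rhobar}r\big)^{-1}=1+O(r)$ and integrating yields $T_{\mathrm{coll}}^{\partial\Om}\le 2\pi d(0)^2+O(d(0)^3)\le 2\pi\delta_0^2+O(\delta_0^3)$, which is \eqref{eq:ubht}. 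The leading constant $2\pi$ comes purely from $\dot d\approx-1/(4\pi d)$, whose separated form $\tfrac{\de}{\de t}(d^2)\approx-1/(2\pi)$ already predicts collision at time $2\pi d(0)^2$; the bounded remainder only affects the $O(\delta_0^3)$ correction. Finiteness of $T_{\mathrm{coll}}^{\partial\Om}$ follows from the lower bound $-\dot d\ge 1/(8\pi d)$, valid once $d$ is small.

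Third -- and this is where the real work lies -- I must ensure that the force expansion \eqref{505} is available along the whole trajectory, i.e. that $z(t)$ stays in a set $\cD_{n,\delta,\gamma}$ of the form \eqref{eq:Geomofz} until $z_1$ reaches $\partial\Om$, and that no other collision pre-empts it. This is a continuation (bootstrap) argument. On one hand $d_1(z_1(t))$ is decreasing, so the condition $d_1<\delta$ is preserved with $\delta:=\delta_0$. On the other hand, the far dislocations $z'=(z_2,\ldots,z_n)$ start with $d_{n-1}(z'(0))>\gamma_0$, and since each stays far from $z_1$ and from $\partial\Om$, the forces $f_2,\ldots,f_n$ are bounded by some $M=M(n,\gamma_0,\Om)$ uniformly in time; over the interval $[0,T_{\mathrm{coll}}^{\partial\Om}]$, whose length is $O(\delta_0^2)$ by the second step, each $z_j$ therefore moves at most $M\,T_{\mathrm{coll}}^{\partial\Om}=O(\delta_0^2)$. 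Choosing $\delta_0$ small (in particular $\delta_0<\sigma\rhobar$, and small enough that this displacement is tiny) keeps $d_{n-1}(z'(t))$ above a fixed $\gamma\in(\max\{2\delta_0,\rhobar\},\gamma_0)$ -- which requires only, as we may assume, that $\gamma_0>\rhobar$ -- so that Theorem \ref{thm:fatal} applies at every time with a single constant $C=C_{n,\sigma}(\gamma)$. A maximal-time continuation argument then shows that the solution of \eqref{500} exists and stays in $\Om^n$ with $z'$ interior, up to the first time at which $d(t)=0$, which is the desired $T_{\mathrm{coll}}^{\partial\Om}$. The hard part will be precisely this simultaneous control: guaranteeing that the forces on the far dislocations remain bounded while $z_1$ develops the $1/d$ singularity, so that the nearest dislocation collides first and the expansion \eqref{505} persists up to collision.
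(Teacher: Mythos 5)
Your proposal is correct and takes essentially the same route as the paper's (sketched) proof: an upper bound on the collision time of $z_1$ conditional on the configuration remaining in a set $\cD_{n,\delta,\gamma}$, obtained by integrating the differential inequality for $d_1(z_1(t))$ that follows from Theorem~\ref{thm:fatal}, combined with a lower bound on the time at which the far dislocations could spoil the geometric condition on $d_{n-1}(z')$, and a choice of $\delta_0$ making the former time smaller than the latter. Your bootstrap in the third step is precisely the paper's comparison of the conditional collision time with the exit time from $\cD_{n,\delta,\gamma_0/2}$, so the two arguments coincide in structure and in the resulting estimate \eqref{eq:ubht}.
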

\begin{proof}[Sketch of the proof]
We first find an upper bound on the collision time for dislocation $z_1$ hitting the boundary, conditional on the configuration $z$ remaining in $\cD_{n,\delta,\gamma}$.
Then, after fixing $\gamma_0\in(0,\diam\Omega/2)$, we establish a lower bound on the time at which the configuration $z$ leaves the set $\cD_{n,\delta,\gamma_0/2}$ due to $d_{n-1}(z')$ becoming smaller than $\gamma_0/2$.
The proof is concluded by finding conditions on $\delta$ under which the former collision time is smaller than the latter. 
\end{proof}

We now turn to a scenario for collisions of dislocations.
We will find sufficient conditions for a collision between two dislocations to occur before any other collision event.
We suppose that we have $n\in\N$ ($n\geq2$) dislocations in $\Omega$ two of which, $z_1$ and $z_2$, with Burgers moduli $b_1=+1$ and $b_2=-1$, are much closer to each other than the others; we also suppose that the other $n-2$ dislocations, $z_3,\ldots,z_n$, are sufficiently distant from each other and from the boundary.
We will show that $z_1$ and $z_2$ will collide in finite time, and that this happens before any other collision event occurs.

We define $z'':=(z_3,\ldots,z_n)$ so that a trajectory of the evolution of the configuration of the $n$ dislocations can be represented by the vector $z(t):=(z_1(t),z_2(t),z''(t))\in\Omega^n$.
In this case, the meaningful trajectories for our scenario are those that lie within sets of the form
\begin{equation*}
\begin{split}
  \cC_{n,\zeta,\eta}:=\Big\{(z_1,z_2,z'')\in\Omega^n : &\; |z_1-z_2|< \zeta, d_{n-2}(z'')>\eta, \\ 
  &\; \dist(\{z_1,z_2\},\{z_3,\ldots,z_n\}\cup\partial\Om)>\eta\Big\},
   \end{split}
\end{equation*}
with $\zeta<\eta$ properly quantified (see \cite{HM} for the details).
The geometric meaning of the set $ \mathcal{C}_{n,\zeta,\eta}$ defined above is the following: if $z\in  \mathcal{C}_{n,\zeta,\eta}$, it means that $z_1$ and $z_2$ lie at a distance of at most $\zeta$ from each other, while all the other dislocations $z_3,\ldots,z_n$ lie at a distance of at least $\eta$ away from the boundary and their mutual distance is also at least $\eta$. Moreover, $z_1$ and $z_2$ are at least $\eta$ far away from any other dislocations and from the boundary.

\begin{theorem}[{(c) -- collision between dislocations \cite{HM}}]\label{thm:collision}
Let $n\in\N$ ($n\geq2$) and let $\eta_0\in(0,\diam\Om/2)$.
There exists $\zeta_0>0$ such that, if $z(0)\in\mathcal{C}_{n,\zeta_0,\eta_0}$, then there exists $T_{\mathrm{coll}}^{\pm}>0$ such that the evolution $z(t)$ is defined for $t\in[0,T_{\mathrm{coll}}^{\pm}]$, $z(t)\in\Omega^n$ for $t\in[0,T_{\mathrm{coll}}^{\pm})$, and $z_1(T_{\mathrm{coll}}^{\pm})=z_2(T_{\mathrm{coll}}^{\pm})\in\Omega$ and $z''(T_{\mathrm{coll}}^{\pm})\in\Omega^{n-2}$.
Furthermore, as $\zeta_0\to0$, the following estimate holds
\begin{equation}\label{Tcollprime}
  T_{\mathrm{coll}}^{\pm}\leq\frac{\pi\zeta_0^2\eta_0^2}{2(\eta_0^2-\zeta_0^2-2(n-2)\zeta_0\eta_0)}.
  \end{equation}
\end{theorem}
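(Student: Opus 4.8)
The plan is to monitor the relative position $w(t):=z_1(t)-z_2(t)$ and show that $|w|^2$ collapses to zero at a controlled rate, following the strategy already sketched for Theorem~\ref{thm:boundary_collision}. From the gradient flow \eqref{500} and the expression \eqref{renen} for $\cE_n$ one has $\dot w=-\nabla_{z_1}\cE_n+\nabla_{z_2}\cE_n$. Because $b_1b_2=-1$, the only singular contribution comes from the pair term $-\tfrac{b_1b_2}{2\pi}\log|z_1-z_2|=\tfrac1{2\pi}\log|z_1-z_2|$, and its two argument-gradients reinforce rather than cancel, giving
\begin{equation*}
\dot w=-\frac1\pi\,\frac{w}{|w|^2}+R(z),
\end{equation*}
where $R$ collects the gradient of $k_\Omega(z_1,z_2)$ from the $(1,2)$ pair, the self-energy gradients $\nabla h_\Omega(z_1),\nabla h_\Omega(z_2)$, and the full interaction of $z_1,z_2$ with $z_3,\dots,z_n$. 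All terms in $R$ are smooth as long as $z_1,z_2$ remain at distance larger than $\eta$ from $\partial\Omega$ and from the other dislocations. Pairing with $2w$ yields
\begin{equation*}
\frac{\de}{\de t}|w|^2=2w\cdot\dot w=-\frac2\pi+2w\cdot R(z),
\end{equation*}
so the leading collapse rate is the constant $-2/\pi$, already matching the announced leading behaviour $T\approx\tfrac\pi2\zeta_0^2$ obtained by letting $\zeta_0\to0$ in \eqref{Tcollprime}.

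The core of the argument is to bound $2w\cdot R(z)$ on $\cC_{n,\zeta,\eta}$ by $\tfrac2\pi\big(\zeta^2+2(n-2)\zeta\eta\big)/\eta^2$. I would split $R$ into two families. The contributions of $k_\Omega(z_1,z_2)$ and of the self-energies $h_\Omega(z_1),h_\Omega(z_2)$ appear as differences of gradients evaluated at the nearby points $z_1,z_2$; using the symmetry of $k_\Omega$ (so that its two argument-gradients agree on the diagonal) together with Hessian bounds for $k_\Omega$ and $h_\Omega$ that are uniform at distance larger than $\eta$ from $\partial\Omega$, each such difference is $O(|w|/\eta^2)$, which produces the $\zeta^2/\eta^2$ term after pairing with $2w$. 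The interaction with each $z_j$, $j\ge3$, contributes gradients of $\log|z_i-z_j|$ and of $k_\Omega(z_i,z_j)$; since $|z_i-z_j|>\eta$ and the corresponding Green's-function gradients are controlled at distance larger than $\eta$ from the boundary, these $n-2$ terms are of size $O\big((n-2)|w|/\eta\big)$ and collect into the $2(n-2)\zeta\eta/\eta^2$ term. This gives the differential inequality
\begin{equation}\label{eq:key-diff-ineq}
\frac{\de}{\de t}|w|^2\le-\frac2\pi\cdot\frac{\eta^2-\zeta^2-2(n-2)\zeta\eta}{\eta^2},
\end{equation}
valid as long as $z(t)\in\cC_{n,\zeta,\eta}$.

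Before integrating \eqref{eq:key-diff-ineq} I must guarantee it holds throughout the collision, i.e.\ that the trajectory does not first leave $\cC_{n,\zeta_0,\eta_0}$. This is the same two-sided mechanism as in Theorem~\ref{thm:boundary_collision}: the attraction keeps $|w|\le\zeta_0$; the centre of mass $\tfrac12(z_1+z_2)$ moves at bounded speed because the singular parts of $\dot z_1$ and $\dot z_2$ cancel in the sum, so $z_1,z_2$ cannot drift to $\partial\Omega$ in the short collision time and meet at an interior point; and the forces on $z_3,\dots,z_n$ are bounded (all separations exceed $\eta_0$), so a continuity/bootstrap argument shows that $d_{n-2}(z'')$ and the separations from $\{z_1,z_2\}$ stay above a fixed fraction of $\eta_0$ until the collision, which in particular gives $z''(T_{\mathrm{coll}}^{\pm})\in\Omega^{n-2}$. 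Choosing $\zeta_0$ small enough, depending on $n$ and $\eta_0$, keeps the numerator $\eta_0^2-\zeta_0^2-2(n-2)\zeta_0\eta_0$ positive and forces the collision to precede any exit from $\cC$. Integrating \eqref{eq:key-diff-ineq} from $|w(0)|^2<\zeta_0^2$ down to $0$ then delivers exactly \eqref{Tcollprime}.

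The main obstacle is the quantitative control of the boundary-response terms in $R$: one needs gradient and Hessian estimates on the regular part $k_\Omega$ of the Green's function that are uniform for points at distance larger than $\eta$ from $\partial\Omega$, with constants sharp enough to reproduce the precise denominator of \eqref{Tcollprime}. This is exactly where the geometry of $\Omega$ enters through the uniform disk conditions \eqref{1011} and the radius $\rhobar$, and it is the step requiring the most care.
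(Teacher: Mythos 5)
Your proposal follows essentially the same strategy as the paper's (sketched) proof: a conditional upper bound on the collision time of $z_1,z_2$ while the trajectory remains in $\cC_{n,\zeta,\eta}$ — obtained from the differential inequality for $|z_1-z_2|^2$ with leading rate $-2/\pi$ coming from the singular attraction — combined with a lower bound on the time at which the configuration could exit the containment set (bounded forces on $z_3,\dots,z_n$, cancellation of the singular parts in the centre-of-mass motion), and a choice of $\zeta_0$ small enough that the collision precedes any exit. The resulting integration reproduces exactly the bound \eqref{Tcollprime}, so the argument matches the paper's approach in both structure and quantitative content.
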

\begin{proof}[Sketch of the proof]
As for proving Theorem \ref{thm:boundary_collision}, we first find an upper bound on the collision time for dislocations $z_1$ and $z_2$, conditional on the configuration $z$ remaining in $\mathcal{C}_{n,\zeta,\eta}$.
Then,  after fixing $\eta_0\in(0,\diam\Omega/2)$, we establish a lower bound on the time at which the configuration $z$ leaves the set $\mathcal{C}_{n,\zeta,\eta_0/2}$ due to $d_{n-2}(z'')$ becoming smaller than $\eta_0/2$.
The proof is concluded by finding conditions on $\zeta$ under which the former collision time is smaller than the latter. 
\end{proof}

\section{CONFINEMENT UNDER SUITABLE BOUNDARY CONDITIONS}\label{lmsz}
In this section, we consider a geometrical setting which is slightly different from the one in Section \ref{tom}, but more suitable for expressing the coming results. 
We start by assuming that
\begin{equation}\label{H1}
\text{$\Omega$ is a bounded convex open set with $C^1$ boundary,}
\end{equation}
and we consider the lattice spacing of the material to be $2 \pi$ (instead of $1$ as before) and that all the Burgers vectors are oriented in the same direction.
Therefore, any dislocation line is characterised by a Burgers vector of magnitude $2\pi$. 
Moreover, we assume that an \emph{external strain} acts on the crystal: we prescribe the tangential strain on $\partial \Om$ to be a function
\begin{equation}\label{datum}
f\in L^{1}(\partial \Om) \quad \text{with}\quad \int_{\partial \Om} f(x)\,\de x=2\pi.
\end{equation}
This choice of the external strain will determine at most one dislocation inside $\Om$, which we denote by $a$.
Thus, the strain of the deformed crystal is represented by a field
$h_a \in L^1(\Omega;\mathbb R^2)\cap L^2_{\text{loc}}(\Omega\setminus\{a\};\mathbb R^2)$, 
solution to \eqref{100} complemented by the Dirichlet boundary condition
\begin{equation}\label{Fa}
\begin{cases}
\div h_a = 0 & \text{in } \Omega,
\\
\curl h_a = 2\pi \delta_a & \text{in } \Omega,
\\
h_a \cdot \tau= f  &  \text{on }\partial \Omega,
\end{cases}
\end{equation}
where $\tau$ is the tangent unit vector to $\partial \Om$.

By means of the \emph{core radius approach}, one can approximate $h_a$ in $L^2_{\mathrm{loc}}(\Om\setminus\{a\};\R2)$ by a sequence $h_a^\eps\in L^2(\oea;\R2)$ in the perforated domain $\Om_\eps(a):=\Om\setminus\overline B_\eps(a)$, solutions to
\begin{equation}\label{Fa2}
\begin{cases}
\div h_a^\eps = 0  & \text{in } \oea,
\\
\curl h_a^\eps=0 & \text{in } \oea,
\\
h_a^\eps \cdot \tau= f &  \text{on }\partial \Omega\setminus \overline{B}_\eps(a),
\\
h_a^\eps\cdot \nu = 0  &  \text{on }\partial B_\eps(a) \cap \Omega,
\end{cases}
\end{equation}
where $\nu$ is the outer unit normal to $\oea$ (observe that in this case $\nu$ is the inner unit normal to $B_\eps(a)\cap\Omega$), see \cite{BM} and \cite[Theorem 4.1]{CL}.
System \eqref{Fa2} characterises the minimisers of the energy functional (compare with \eqref{104})
\begin{equation}\label{energy1}
\cE_\eps(a):=\min \bigg\{\frac{1}{2}\int_{\oea} \!\!|h|^2\,\de x:  \text{$ h\in L^2(\oea;\mathbb R^2)$, $\curl  h=0$, $ h\cdot \tau=f$ on $\partial \Om\setminus \overline{B}_\eps(a)$}\bigg\}.
\end{equation}
In view of \eqref{103}, we study the asymptotic behaviour of the functionals $\mathcal F_\eps\colon\overline\Om\to\R{}$ defined by
\begin{equation}\label{Feps}
\mathcal F_\eps(a):=\mathcal E_\eps(a)-\pi|\log \eps|
\end{equation}
as $\eps\to0$, to some limit functional $\mathcal F\colon\overline\Omega\to\R{}\cup\{+\infty\}$.
To make the asymptotics precise, we resort to the notion of \emph{continuous convergence} \cite[Definition 4.7]{dalmaso}:
we say that the sequence of functionals $\mathcal F_\eps$ continuously converge in $\overline\Om$ to $\mathcal F$ as $\eps\to0$ if,
for any sequence of points $a^\eps\in\overline\Om$ converging to $a\in\overline\Om$, the sequence (of real numbers) $\mathcal F_\eps(a^\eps)$ converges to $\cF(a)$.
It is well known that the notion of continuous convergence is equivalent to the $\Gamma$-convergence of $\cF_\eps$ and $-\cF_\eps$ to $\cF$ and $-\cF$, respectively, see \cite[Remark 4.9]{dalmaso}. 
Let $d(a):=d_1(a)=\dist(a,\partial\Om)$, let $b\in\partial\Omega\cap\partial B_{d(a)}(a)$, and let $(\rho_a,\theta_a)$ be the polar coordinate system centred at $a$ such that the point $b$ has angular coordinate $\theta_a(b)=0$.
We denote by $\hat\rho_a$ and $\hat\theta_a$ the unit vectors associated with the polar coordinates.
Finally let $g\colon \partial\Om\to\R{}$ be a primitive of $f$ with a jump point at $b$. 
\begin{theorem}[{(d) -- confinement of one dislocation \cite{LMSZ}}]\label{thGc}
Under the assumptions \eqref{H1} and \eqref{datum}, as $\eps\to0$ the functionals $\cF_\eps$ defined by \eqref{Feps} continuously converge in $\overline\Om$ to the functional $\mathcal F:\overline\Om\to\R{}\cup\{+\infty\}$ defined as
\begin{equation}\label{Gammalimit}
\cF(a):=\pi\log d(a)+\frac{1}{2}\int_{\Om_{d(a)}(a)}|K_a + \nabla v_a|^2\, \de x+\frac{1}{2}\int_{B_{d(a)}(a)}|\nabla v_a|^2\,\de x,
\end{equation}
if $a\in\Om$, and $\cF(a):=+\infty$ otherwise. 
Here $K_a(x):=\rho_a^{-1}(x)\hat\theta_a(x)$ and $v_a$ is the solution to
\begin{equation*}
\begin{cases}
\Delta  v_a=0&\text{ in  }\Omega, \\
v_a=g-\theta_a&\text{ on  }\partial\Omega.
\end{cases}  
\end{equation*}
In particular, $\cF$ is continuous over $\overline\Om$ and diverges to $+\infty$ as the dislocation approaches the boundary, that is, $\cF(a)\to +\infty$ as $d(a)\to0$. 
Thus, $\cF$ attains its minimum in the interior of $\Om$.
\end{theorem}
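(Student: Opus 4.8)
The plan is to prove Theorem~\ref{thGc} by first establishing the continuous convergence $\cF_\eps\to\cF$ and then reading off the qualitative properties (continuity, divergence at the boundary, interior minimisation) from the explicit formula~\eqref{Gammalimit}. The convergence statement is the analytic heart, so I would begin there. Fix a sequence $a^\eps\to a\in\overline\Om$ and analyse $\cE_\eps(a^\eps)=\tfrac12\int_{\Om_\eps(a^\eps)}|h_{a^\eps}^\eps|^2\,\de x$. The natural first move is to decompose the admissible field on the punctured domain into a singular model part carrying the topological charge and a regular harmonic correction matching the boundary datum $f$. Concretely, near the dislocation the field behaves like the canonical singular solution $K_a(x)=\rho_a^{-1}(x)\hat\theta_a(x)$, whose curl is $2\pi\delta_a$, and the mismatch is absorbed by the harmonic function $v_a$ solving the Dirichlet problem with datum $g-\theta_a$ on $\partial\Om$. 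I would verify that $K_a+\nabla v_a$ is (up to a gradient of a harmonic function supported away from $a$) the minimiser characterised by~\eqref{Fa2}, so that the energy splits cleanly into the contribution outside $B_{d(a)}(a)$ and inside it.

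The second step is the quantitative energy expansion. Writing $\Om_\eps(a^\eps)=\big(\Om\setminus B_{d(a)}(a)\big)\cup\big(B_{d(a)}(a)\setminus B_\eps(a^\eps)\big)$ isolates the logarithmically divergent piece: the self-energy of $K_a$ on the annulus $B_{d(a)}(a)\setminus B_\eps(a^\eps)$ is $\tfrac12\int|\rho_a^{-1}|^2=\pi\log\frac{d(a)}{\eps}=\pi\log d(a)-\pi\log\eps+o(1)$, which exactly cancels the subtracted $\pi|\log\eps|$ in the definition~\eqref{Feps} of $\cF_\eps$ and produces the leading term $\pi\log d(a)$ in~\eqref{Gammalimit}. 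The remaining terms converge to the two non-singular integrals in~\eqref{Gammalimit}: the cross term $\int K_a\cdot\nabla v_a$ must be controlled (integration by parts, using that $\nabla v_a$ is harmonic and that $K_a$ is divergence-free away from $a$, so the boundary contributions on $\partial\Om$ and on the shrinking circle $\partial B_{d(a)}(a)$ are the only survivors), and the $\nabla v_a$ self-energy converges by continuity of the Dirichlet solution map in $a$. The delicate point is uniformity: because $a^\eps$ varies with $\eps$, I must show the polar frame $(\rho_{a^\eps},\theta_{a^\eps})$, the harmonic corrector $v_{a^\eps}$, and the error terms are all \emph{uniformly} controlled as $a^\eps\to a$, which requires stability estimates for $v_a$ with respect to the pole and careful handling of the primitive $g$ and its jump point $b$ as $a$ moves.

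I expect the main obstacle to be precisely this stability and the behaviour as $a\to\partial\Om$. When $a\in\Om$ the splitting is clean, but the formula for $\cF$ and the convergence must survive the limit $d(a)\to0$, and one must prove $\cF(a)\to+\infty$ there. The term $\pi\log d(a)\to-\infty$, so the divergence is \emph{not} obvious from the leading term alone; the positive contribution must come from the interaction integrals, in particular from the cross term $\int_{\Om_{d(a)}(a)}K_a\cdot\nabla v_a$ as the annular region degenerates and the boundary datum $g-\theta_a$ develops a sharp transition. Establishing that the total blows up to $+\infty$ (rather than cancelling) is the crux, and I would attack it via a lower bound that controls the energy by the interaction between the singularity and its image across the nearly-flat boundary, exploiting convexity and $C^1$ regularity of $\partial\Om$ from~\eqref{H1}.

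Once continuous convergence and the boundary divergence are in hand, the conclusion is soft: continuous convergence of $\cF_\eps$ to $\cF$ on the compact set $\overline\Om$ gives (via the equivalence with two-sided $\Gamma$-convergence recorded after~\eqref{Feps}) that $\cF$ is continuous on $\overline\Om$; since $\cF(a)<+\infty$ for $a\in\Om$ while $\cF(a)=+\infty$ on $\partial\Om$ and $\cF(a)\to+\infty$ as $d(a)\to0$, the infimum of the continuous, coercive-at-the-boundary functional $\cF$ is attained, and the minimiser cannot lie on $\partial\Om$, hence lies in the interior of $\Om$. This yields the asserted confinement of the single dislocation.
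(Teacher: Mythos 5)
A point of reference first: this note does not actually contain a proof of Theorem~\ref{thGc}. The paper states explicitly that the proofs of Theorem~\ref{thGc} through Corollary~\ref{confinon} are ``extremely technical and beyond the scopes of this note'' and defers them to \cite{LMSZ}, offering only a two-line outline (``some work on harmonic functions'' plus the convergence of ``the individual contributions to the energies $\cF_\eps$'' to their counterparts in $\cF$). Measured against that outline, your architecture is the same one the paper gestures at: the splitting of an admissible strain into the singular field $K_a$ plus the harmonic corrector $\nabla v_a$ with boundary datum $g-\theta_a$, the exact cancellation of the subtracted $\pi|\log\eps|$ by the annular self-energy of $K_a$, the vanishing of the cross term on annuli centred at the pole (since $K_a\cdot\nu=0$ on circles centred at $a$), the term-by-term passage to \eqref{Gammalimit}, and the soft endgame (continuity on the compact set $\overline\Om$ plus blow-up at $\partial\Om$ forces an interior minimiser).

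That said, your plan leaves the two decisive steps as stated intentions, and these are exactly the ``extremely technical'' parts. (i) Identification of the minimiser: $K_{a^\eps}+\nabla v_{a^\eps}$ is an admissible competitor in \eqref{energy1}, so your computation only yields the upper bound $\cE_\eps(a^\eps)\leq \tfrac12\int_{\Om_\eps(a^\eps)}|K_{a^\eps}+\nabla v_{a^\eps}|^2\,\de x$; it is \emph{not} the minimiser of \eqref{Fa2}, whose normal trace vanishes on $\partial B_\eps(a^\eps)\cap\Om$ while that of $\nabla v_{a^\eps}$ does not. A matching lower bound (e.g.\ via the affine structure of the constraint set, which makes the energy of any competitor equal to $\cE_\eps$ plus the energy of the difference, together with an estimate showing this gap vanishes uniformly as the pole moves) is needed and is not supplied. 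Relatedly, your annulus computation removes the core at $a^\eps$ but centres the field and the ball at $a$; if $|a^\eps-a|\gg\eps$ this does not excise the singularity, so the whole expansion must be organised around $a^\eps$ and only then passed to the limit using the stability in the pole you mention. (ii) The blow-up $\cF(a)\to+\infty$ as $d(a)\to0$: you correctly flag that $\pi\log d(a)\to-\infty$ makes this nontrivial, and your image-dislocation heuristic has the right sign (with the tangential strain prescribed as in \eqref{datum}, the image carries the \emph{same} Burgers modulus, so the interaction is repulsive --- in contrast with the free boundary of Section~\ref{tom}, which attracts), but turning this into a rigorous lower bound compatible with the $C^1$ convex geometry of \eqref{H1} is the crux of \cite{LMSZ} and is not carried out here. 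In short: the strategy is the right one and consistent with the paper's outline, but as it stands it is a programme, not a proof.
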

A consequence of Theorem \ref{thGc} is that also the energies \eqref{energy1} attain their minimum in the interior of $\Om$.
\begin{corollary}[{(d) -- confinement of one dislocation \cite{LMSZ}}]\label{confino}
Under the assumptions \eqref{H1} and \eqref{datum}, there exists $\euno>0$ such that, for every $\eps\in(0,\euno)$, the infimum problem
\begin{equation}\label{LMSZ}
\inf\{\cE(a): a \in \overline\Omega\}
\end{equation}
admits a minimiser only in the interior of $\Om$. 
Moreover, if $a^\eps\in \Omega$ is a minimiser for \eqref{LMSZ}, then (up to subsequences) $a^\eps\to a$ and $\cF_a(a^\eps)\to \cF(a)$, as $\eps\to 0$, where $a$ is a minimiser of the functional $\cF$ defined in \eqref{Gammalimit}. 
In particular, for $\eps$ small enough, all the minimisers of problem \eqref{LMSZ} stay uniformly (with respect to $\eps$) far away from the boundary. 
\end{corollary}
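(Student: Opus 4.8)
The plan is to obtain the corollary as a soft consequence of the continuous convergence $\cF_\eps\to\cF$ furnished by Theorem \ref{thGc}, since continuous convergence on the compact set $\overline\Om$ is precisely the device that transfers information about minima and minimisers from the limit $\cF$ back to $\cF_\eps$. First I would note that, because $\cF_\eps(a)=\cE_\eps(a)-\pi|\log\eps|$ differs from $\cE_\eps(a)$ by an additive constant independent of $a$, problem \eqref{LMSZ} and the minimisation of $\cF_\eps$ over $\overline\Om$ have exactly the same minimisers, so it suffices to work with $\cF_\eps$. To guarantee that \eqref{LMSZ} admits a minimiser at all, I would verify that for each fixed $\eps$ the map $a\mapsto\cF_\eps(a)$ is lower semicontinuous on the compact set $\overline\Om$, so that a minimiser $a^\eps$ exists by the direct method; this follows from standard elliptic estimates, using that the perforated domain $\oea$ varies continuously with $a$.

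Next I would set $m_\eps:=\min_{\overline\Om}\cF_\eps$ and $m:=\min_{\overline\Om}\cF$, the latter attained by Theorem \ref{thGc} at some interior point $a^\star\in\Om$, and prove $m_\eps\to m$ by two one-sided bounds. For the upper bound I test $\cF_\eps$ against the constant sequence $a^\star$: continuous convergence gives $\cF_\eps(a^\star)\to\cF(a^\star)=m$, whence $\limsup_\eps m_\eps\le m$. For the lower bound I take minimisers $a^\eps$ and, by compactness of $\overline\Om$, extract a subsequence $a^\eps\to\bar a\in\overline\Om$; continuous convergence then yields $m_\eps=\cF_\eps(a^\eps)\to\cF(\bar a)\ge m$, so $\liminf_\eps m_\eps\ge m$. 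Combining the two gives $m_\eps\to m$ and $\cF(\bar a)=m$, so every such limit point $\bar a$ minimises $\cF$ and therefore lies in the interior of $\Om$ by Theorem \ref{thGc}. This already establishes the asserted subsequential convergence $a^\eps\to a$ and $\cF_\eps(a^\eps)\to\cF(a)$ with $a$ a minimiser of $\cF$.

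It remains to exclude boundary minimisers for all small $\eps$ and to upgrade this to uniform confinement, which I would do by contradiction. If there were a sequence $\eps_k\to0$ and minimisers $a^{\eps_k}$ with $d(a^{\eps_k})\to0$ (any boundary minimiser qualifies), then, up to a subsequence, $a^{\eps_k}\to\bar a\in\partial\Om$; continuous convergence together with the blow-up $\cF(a)\to+\infty$ as $d(a)\to0$ from Theorem \ref{thGc} would force $m_{\eps_k}=\cF_{\eps_k}(a^{\eps_k})\to+\infty$, contradicting $m_{\eps_k}\to m<+\infty$. Hence there are $c>0$ and $\euno>0$ with $d(a^\eps)\ge c$ for every minimiser $a^\eps$ and every $\eps\in(0,\euno)$; in particular no minimiser lies on $\partial\Om$ and all minimisers stay uniformly far from the boundary.

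The only place requiring genuine work is the fixed-$\eps$ lower semicontinuity (equivalently continuity) of $a\mapsto\cF_\eps(a)$ that secures existence of $a^\eps$; the delicate regime is $d(a)<\eps$, where the core ball $\overline B_\eps(a)$ only partially meets $\Om$ and the variational problem \eqref{energy1} must be interpreted with care. Everything downstream is the standard mechanism by which $\Gamma$-convergence transfers convergence of minima and minimisers, available here in the sharper continuous-convergence form quoted after \eqref{Feps}, namely the equivalence with the $\Gamma$-convergence of both $\cF_\eps$ and $-\cF_\eps$.
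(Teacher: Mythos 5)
Your proposal follows essentially the same route as the paper: the paper gives no detailed proof of this corollary (it defers to \cite{LMSZ}), stating only that it ``relies on the relationship between $\Gamma$-convergence and continuous convergence,'' which is exactly the mechanism you exploit --- continuous convergence transfers minima and (subsequential) minimisers to the limit, and the blow-up $\cF(a)\to+\infty$ as $d(a)\to0$ from Theorem \ref{thGc} rules out boundary minimisers and yields the uniform confinement. The one step you leave as a sketch --- continuity (or lower semicontinuity) of $a\mapsto\cE_\eps(a)$ on $\overline\Om$ for fixed $\eps$, which secures existence of the minimisers $a^\eps$ --- is indeed where the technical work lies and is handled in \cite{LMSZ}; you correctly identify it, including the delicate regime $d(a)<\eps$ where the core ball meets $\partial\Om$.
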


having $n\geq2$ dislocations in the crystal is equivalent to imposing a boundary datum with circulation equal to $2\pi n$, which we can obtain by taking the tangential strain $F\cdot\tau$ to be $nf$ on $\partial\Om$, with $f$ as in \eqref{datum}.
Therefore, in analogy with \eqref{energy1}, for $\eps>0$, we define the energy of the $n$-tuple $(a_1,\ldots,a_n)\in \overline\Om{}^n$ as
\begin{equation}\label{energy-n}
\cE_\eps(a_1,\ldots,a_n):=
\min \bigg\{\frac{1}{2}\int_{\Omega_\eps(a_1,\ldots,a_n)} \!\! |h|^2\,\de x\, : \text{$h\in \cX$, $h\cdot \tau=nf$ on $\partial \Om\setminus \bigcup_{i=1}^n \overline{B}_\eps(a_i)$}\bigg\}.
\end{equation}
Here $\Om_\eps(a_1,\ldots,a_n):=\Om\setminus\bigcup_{i=1}^n\overline{B}_\eps(a_i)$ and 
$\cX$ is the space characterized by
\begin{equation}\label{machisei}
h\in \cX:=\cX_\eps(a_1,\ldots,a_n)  \quad \Longleftrightarrow \quad 
\begin{cases}
& h\in L^2(\Omega_\eps(a_1,\ldots,a_n); \R2),
\\
& \curl h = 0 \ \text{in }\mathcal D'(\Omega_\eps(a_1,\ldots,a_n)),
\\
& \int_{\gamma} h\cdot \tau = 2\pi m,
\end{cases}
\end{equation}
where $\gamma$ is an arbitrary simple closed curve in $ \Omega_\eps(a_1,\ldots,a_n)$ winding once counterclockwise around $m$ dislocations. 
In the sequel, for the sake of brevity, we shall omit the dependence on both $\eps$ and the points $a_1,\ldots,a_n$ for the space $\cX$.
Notice that the spaces in \eqref{machisei} are encapsulated, namely, if $0<\eps<\eta$ and if $F\in\cX_\eps(a_1,\ldots,a_n)$, then its restriction to $\Omega_\eta(a_1,\ldots,a_n)$ belongs to $\cX_\eta(a_1,\ldots,a_n)$, since $\Omega_\eta(a_1,\ldots,a_n)\subset\Omega_\eps(a_1,\ldots,a_n)$.
In virtue of this, it follows that
\begin{equation}\label{ginevra}
\cE_\eps(a_1,\ldots,a_n)\geq\cE_\eta(a_1,\ldots,a_n).
\end{equation}
A computation similar to \eqref{103} shows that the energy defined in \eqref{energy-n} behaves asymptotically like $C|\log\eps|$ as $\eps\rightarrow0$, where $C$ depends on the mutual positions of the dislocations. 
In particular, if the $a_i$'s are all distinct and inside $\Om$, the energy diverges like $\pi n|\log\eps|$.
This suggests to study the asymptotic behavior, as $\eps\to0$, of the functionals $\cF_\eps\colon\overline\Om{}^n\to\R{}\cup\{+\infty\}$ defined by
\begin{equation}\label{Fepsn}
\cF_\eps(a_1,\ldots,a_n):= \cE_\eps(a_1,\ldots,a_n)-\pi n|\log\eps|.
\end{equation}
In this context, we say that the sequence of functionals $\cF_\eps$ continuously converge in $\overline\Om{}^n$ to $\cF$ as $\eps\to0$ if, for any sequence of points $(a_1^\eps,\ldots,a_n^\eps)\in\overline\Om{}^n$ converging to $(a_1,\ldots,a_n)\in\overline\Om{}^n$, the sequence (of real numbers) $\cF_\eps(a_1^\eps,\ldots,a_n^\eps)$ converges to $\cF(a_1,\ldots,a_n)$.
Let $g\colon \partial\Om\to\R{}$ be a primitive of $f$ with $n$ jump points $b_i\in\partial \Om$ where the amplitude of each jump is $2\pi/n$, 
and for every $i\in\{ 1,\ldots,n \}$ set
\begin{equation}\label{dconi}
d_i:=\min_{j\in \{1,\ldots,n \}, j\neq i}\bigg\{\frac{|a_i-a_j|}2,\dist(a_i,\partial \Om)\bigg\}.
\end{equation}

\begin{theorem}[{(e) -- confinement of many dislocations \cite{LMSZ}}]\label{thGcn}
Let $n\geq 2$.
Under the assumptions \eqref{H1} and \eqref{datum}, as $\eps\to0$ the functionals $\cF_\eps$ defined by \eqref{Fepsn} continuously converge in $\overline\Om{}^n$ to the functional $\cF:\overline\Om{}^n\to\R{}\cup\{+\infty\}$ defined as
\begin{equation}\label{Gammalimitn}
\begin{split}
\cF(a_1,\ldots,a_n):= \sum_{i=1}^n\pi\log d_i&+\frac{1}{2}\int_\Om |\nabla v_{a_1,\ldots,a_n}|^2\,\de x+\sum_{i=1}^n\frac12\int_{\Om_{d_i}(a_i)} |K_{a_i}|^2\,\de x \\
&+\sum_{i=1}^n\int_{\Om_{d_i}(a_i)} \nabla v_{a_1,\ldots,a_n}\cdot K_{a_i}\,\de x+\sum_{i<j} \int_\Om K_{a_i}\cdot K_{a_j}\,\de x,
\end{split}
\end{equation}
if $(a_1,\ldots,a_n)\in\Om^n$ with $a_i\neq a_j$ for every $i\neq j$, and $\mathcal F(a_1,\dots,a_n):= +\infty$ otherwise. 
Here $K_{a_i}(x):=\rho_{a_i}^{-1}(x)\hat\theta_{a_i}(x)$ and $v_{a_1,\ldots,a_n}$ is the solution to
\begin{equation*}
\begin{cases}
\Delta  v_{a_1,\ldots,a_n}=0&\text{ in  }\Omega, \\
v_{a_1,\ldots,a_n}=ng-\sum_{i=1}^n\theta_{a_i} &\text{ on  }\partial\Omega.
\end{cases}  
\end{equation*}
In particular, $\cF$ is continuous in $\overline\Om{}^n$ and diverges to $+\infty$ if either at least one dislocation approaches the boundary or at least two dislocations collide, that is, $\cF(a_1,\dots,a_n)\to +\infty$ as $d_i\to0$ for some $i$. 
Thus, $\cF$ attains its minimum in the interior of $\overline{\Om}{}^n$, at an $n$-tuple of distinct points.
\end{theorem}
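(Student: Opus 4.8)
The plan is to reduce the problem to an explicit superposition, exactly as in the single-dislocation case of Theorem~\ref{thGc}, and then to treat the two-body interaction terms by hand. First I would characterise, for a fixed configuration of distinct points $a_1,\ldots,a_n\in\Om$ and small $\eps$, the unique minimiser $h^\eps$ of \eqref{energy-n}. The canonical fields $K_{a_i}=\nabla\theta_{a_i}$ already realise the prescribed circulation $2\pi$ around each $a_i$ and $0$ around the others, so by \eqref{machisei} the difference $h^\eps-\sum_{i=1}^n K_{a_i}$ is curl-free with vanishing circulation around every hole, hence equals a gradient $\nabla v^\eps$ on $\Om_\eps(a_1,\ldots,a_n)$. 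Minimality then forces $v^\eps$ to be harmonic in the perforated domain, to satisfy a homogeneous Neumann condition on each $\partial B_\eps(a_i)$, and, since $K_{a_i}\cdot\tau=\partial_\tau\theta_{a_i}$ on $\partial\Om$, to carry the tangential datum $\partial_\tau v^\eps=nf-\sum_i\partial_\tau\theta_{a_i}$, i.e.\@ the Dirichlet datum $ng-\sum_i\theta_{a_i}$ up to an additive constant.

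Second, I would expand $\tfrac12\int_{\Om_\eps(a_1,\ldots,a_n)}|h^\eps|^2\,\de x$ and isolate the logarithm. Each diagonal self-energy splits, using $|K_{a_i}|^2=\rho_{a_i}^{-2}$ and $B_{d_i}(a_i)\subseteq\Om$, as
\[
\tfrac12\int_{\Om_\eps(a_1,\ldots,a_n)}|K_{a_i}|^2\,\de x
=\tfrac12\int_{B_{d_i}(a_i)\setminus B_\eps(a_i)}\rho_{a_i}^{-2}\,\de x
+\tfrac12\int_{\Om_{d_i}(a_i)}|K_{a_i}|^2\,\de x
=\pi|\log\eps|+\pi\log d_i+\tfrac12\int_{\Om_{d_i}(a_i)}|K_{a_i}|^2\,\de x,
\]
so that summing over $i$ consumes precisely the subtracted $\pi n|\log\eps|$ of \eqref{Fepsn} and produces the terms $\sum_i\pi\log d_i$ and $\sum_i\tfrac12\int_{\Om_{d_i}(a_i)}|K_{a_i}|^2$. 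The reason the remaining terms in \eqref{Gammalimitn} live on $\Om_{d_i}(a_i)$ rather than on $\Om$ is a simple orthogonality: since $K_{a_i}$ is purely tangential on circles centred at $a_i$, while $v^\eps$ and each $\theta_{a_j}$ ($j\neq i$) are harmonic inside $B_{d_i}(a_i)$, integration by parts gives $\int_{B_{d_i}(a_i)}\nabla v^\eps\cdot K_{a_i}=0$ and $\int_{B_{d_i}(a_i)}K_{a_i}\cdot K_{a_j}=0$. The genuinely two-body interactions $\int_\Om K_{a_i}\cdot K_{a_j}$ have integrable singularities at distinct points and therefore pass to the limit directly.

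Third --- and this is the analytic heart --- I would upgrade the pointwise expansion to continuous convergence: for every sequence $(a_1^\eps,\ldots,a_n^\eps)\to(a_1,\ldots,a_n)$ one must show $\cF_\eps(a_1^\eps,\ldots,a_n^\eps)\to\cF(a_1,\ldots,a_n)$, which, by the equivalence between continuous convergence and $\Gamma$-convergence of $\cF_\eps$ and $-\cF_\eps$, amounts to matching upper and lower bounds. The decisive step is the convergence of the correctors: $v^\eps$ is harmonic in the perforated domain with a Neumann condition on the shrinking holes $B_\eps(a_i^\eps)$, and one must prove $v^\eps\to v_{a_1,\ldots,a_n}$ (harmonic in all of $\Om$) strongly enough in $H^1_{\mathrm{loc}}$ to pass both the Dirichlet energy and the mixed terms $\int\nabla v^\eps\cdot K_{a_i}$ to the limit. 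This is a removal-of-small-holes argument --- the Neumann holes have vanishing capacity and do not affect the limiting harmonic function --- but the estimates must be \emph{uniform} as the centres $a_i^\eps$ move and possibly cluster, which is where the careful bookkeeping of the interaction terms enters. I expect this uniform corrector control, together with the continuous dependence of $\theta_{a_i}$, $K_{a_i}$, and of the datum $ng-\sum_i\theta_{a_i}$ on the points, to be the main obstacle.

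Finally, I would establish the divergence on the degenerate set so that continuous convergence extends to all of $\overline\Om{}^n$ with $\cF\equiv+\infty$ there. If two same-sign dislocations collide ($|a_i-a_j|\to0$ in the interior), the pairs $\pi\log d_i+\tfrac12\int_{\Om_{d_i}(a_i)}|K_{a_i}|^2$ remain bounded, and the divergence is carried robustly by the repulsive interaction $\int_\Om K_{a_i}\cdot K_{a_j}\sim-2\pi\log|a_i-a_j|\to+\infty$; if a dislocation approaches $\partial\Om$ ($d_i=\dist(a_i,\partial\Om)\to0$), the divergence comes instead from the corrector energy $\tfrac12\int_\Om|\nabla v_{a_1,\ldots,a_n}|^2$, since the datum $\theta_{a_i}$ steepens near its jump point, exactly as in the confinement mechanism of Theorem~\ref{thGc}. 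Continuity of $\cF$ on the open set of distinct interior configurations, together with this blow-up on its boundary, then yields coercivity of $\cF$, and the direct method produces a minimiser in the interior at an $n$-tuple of distinct points. The one delicate point is that these blow-ups must persist in the joint limit where the configuration degenerates simultaneously with $\eps\to0$, which once more reduces to the uniform estimates of the third step.
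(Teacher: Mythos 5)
Your algebraic skeleton is sound and reproduces the paper's limit functional: the decomposition $h^\eps=\sum_i K_{a_i}+\nabla v^\eps$, the splitting of each self-energy into $\pi|\log\eps|+\pi\log d_i+\tfrac12\int_{\Om_{d_i}(a_i)}|K_{a_i}|^2\,\de x$, and the orthogonality relations $\int_{B_{d_i}(a_i)}\nabla v\cdot K_{a_i}\,\de x=\int_{B_{d_i}(a_i)}K_{a_i}\cdot K_{a_j}\,\de x=0$ (valid because $K_{a_i}$ is divergence-free with $K_{a_i}\cdot\nu=0$ on circles centred at $a_i$) are correct and explain why the mixed terms in \eqref{Gammalimitn} live on $\Om_{d_i}(a_i)$. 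One slip to fix: the homogeneous Neumann condition on each $\partial B_\eps(a_i)$ is the natural boundary condition of the \emph{minimiser} $h^\eps$, not of the corrector; $v^\eps$ inherits the inhomogeneous datum $\partial_\nu v^\eps=-\sum_{j\neq i}K_{a_j}\cdot\nu$ on the $i$-th hole, which is harmless (bounded data on circles of radius $\eps$) but must be estimated, not declared zero.

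The genuine gap is the step you yourself flag twice and postpone: continuous convergence is a statement about \emph{arbitrary} sequences $(a_1^\eps,\ldots,a_n^\eps)\to(a_1,\ldots,a_n)$ in $\overline\Om{}^n$, including those in which separations $|a_i^\eps-a_j^\eps|$ collapse together with $\eps$, and in that regime your scheme fails as stated. If $|a_i^\eps-a_j^\eps|\lesssim\eps$, the holes $B_\eps(a_i^\eps)$ and $B_\eps(a_j^\eps)$ overlap or nearly overlap, $d_i$ is comparable to $\eps$, and the cross term $\int_{\Om_\eps}K_{a_i^\eps}\cdot K_{a_j^\eps}\,\de x$ is itself of order $|\log\eps|$, i.e.\@ of the same order as the renormalisation $\pi n|\log\eps|$ being subtracted in \eqref{Fepsn}; a removal-of-small-holes/capacity argument presupposes holes separated at a scale independent of $\eps$ and gives no uniform control here. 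What is needed is a scale-by-scale comparison of the energy of a clustered configuration with that of a spread-out one, over the whole hierarchy of cluster scales between $\eps$ and $1$ --- this is precisely the ``iterative procedure to treat dislocations that get too close to one another, showing that it is energetically favourable to spread apart'' that the paper identifies as the core of the proof of Theorem \ref{thGcn} and for which it defers to \cite{LMSZ}. As written, your argument yields pointwise convergence at fixed configurations of distinct interior points, and blow-up of the \emph{limit} functional at fixed degenerate configurations, but not the uniform, joint-in-$(\eps,a^\eps)$ statement that constitutes the theorem; the missing multi-scale estimate is not a technicality to be absorbed into ``uniform corrector control'' but the main content of the proof.
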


Notice that \eqref{Gammalimitn} can be expressed in term of the functionals $\cF(a_i)$ defined in \eqref{Gammalimit} for each single dislocation $a_i$, namely,
\begin{equation*}
\cF(a_1,\ldots,a_n)=\sum_{i=1}^n\mathcal F(a_i)+\sum_{i<j}\int_\Om(K_{a_i}+ \nabla v_{a_i})\cdot (K_{a_j}+ \nabla v_{a_j})\, \de x.
\end{equation*}
A consequence of Theorem \ref{thGcn} is that also the energies \eqref{energy-n} attain their minimum in the interior of $\Om{}^n$ at an $n$-tuple of well separated points.
\begin{corollary}[{(e) -- confinement of many dislocations \cite{LMSZ}}]\label{confinon}
Let $n\geq 2$. Under the assumptions \eqref{H1} and \eqref{datum}, there exists $\edue>0$ such that, for every $\eps\in(0,\edue)$, the infimum problem
\begin{equation}\label{LMSZ2}
\inf\{\cE_\eps(a_1,\dots,a_n): (a_1,\cdots,a_n) \in \overline\Omega{}^n\}
\end{equation}
admits a minimiser only in the interior of $\overline\Om{}^n$, at an $n$-tuple of distinct points. Moreover, if $(a_1^\eps,\dots,a_n^\eps)\in \Omega^n$ is a minimiser for \eqref{LMSZ2}, then (up to subsequences) we have $(a_1^\eps,\dots,a_n^\eps)\to (a_1,\dots,a_n)$ and $\cF_\eps(a_1^\eps,\dots,a_n^\eps)\to \cF(a_1,\dots,a_n)$, as $\eps\to 0$, where $(a_1,\dots,a_n)$ is a minimiser of the functional $\cF$ defined in \eqref{Gammalimitn}. 
In particular, for $\eps$ small enough, all the minimisers of problem \eqref{LMSZ} are $n$-tuples of distinct points that stay uniformly (with respect to $\eps$) far away from the boundary and from one another. 
\end{corollary}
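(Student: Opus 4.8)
The plan is to derive the corollary from Theorem~\ref{thGcn} by the by-now standard variational argument linking the (continuous, equivalently $\Gamma$-) convergence of the functionals to the convergence of their minimisers, exactly as Corollary~\ref{confino} is obtained from Theorem~\ref{thGc}. The starting observation is that, for each fixed $\eps$, the energy $\cE_\eps$ and the functional $\cF_\eps$ differ only by the additive constant $\pi n|\log\eps|$, see \eqref{Fepsn}; hence the two problems $\inf_{\overline\Om{}^n}\cE_\eps$ and $\inf_{\overline\Om{}^n}\cF_\eps$ have exactly the same minimisers, and it is enough to work with $\cF_\eps$, for which Theorem~\ref{thGcn} is directly available.

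First I would use compactness of $\overline\Om{}^n$ (which holds because $\Om$ is bounded, by \eqref{H1}) together with the continuous convergence $\cF_\eps\to\cF$ from Theorem~\ref{thGcn}. Continuous convergence makes the passage to minimisers transparent. On the one hand, evaluating at the interior minimiser $\bar a$ of $\cF$ gives $\inf_{\overline\Om{}^n}\cF_\eps\leq\cF_\eps(\bar a)\to\cF(\bar a)=\min\cF<+\infty$. On the other hand, if $a^\eps$ minimises $\cF_\eps$, compactness yields a subsequence $a^\eps\to a^*\in\overline\Om{}^n$, and continuous convergence along it gives $\cF_\eps(a^\eps)\to\cF(a^*)$; combined with the previous bound this forces $\cF(a^*)\leq\min\cF$, so $a^*$ minimises $\cF$ and the minimum values converge. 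This is precisely the fundamental theorem of $\Gamma$-convergence for the equicoercive family $\cF_\eps$, invoking the equivalence between continuous convergence and $\Gamma$-convergence of $\pm\cF_\eps$ recalled after \eqref{Feps} (see \cite[Remark 4.9]{dalmaso}). Existence of a minimiser at fixed small $\eps$ follows from lower semicontinuity of the position-dependent energy \eqref{energy-n} and compactness of $\overline\Om{}^n$, the infimum being finite by the bound just displayed.

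Next I would read off the location of the minimisers from the structure of $\cF$ established in Theorem~\ref{thGcn}. Since $\cF\equiv+\infty$ on every configuration having a coordinate on $\partial\Om$ or two coinciding coordinates, whereas $\min\cF$ is finite and attained at an interior $n$-tuple of distinct points, no minimiser of $\cF$ lies in this degenerate set. By the previous step, every cluster point $a^*$ of a sequence of minimisers of $\cF_\eps$ is a minimiser of $\cF$; hence for $\eps$ small enough the minimisers $(a_1^\eps,\ldots,a_n^\eps)$ of $\cF_\eps$ (equivalently of $\cE_\eps$) lie in $\Om^n$ with pairwise distinct coordinates, and up to subsequences converge to a minimiser $(a_1,\ldots,a_n)$ of $\cF$ with $\cF_\eps(a_1^\eps,\ldots,a_n^\eps)\to\cF(a_1,\ldots,a_n)$.

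The last point is the \emph{uniform} confinement, namely that all minimisers stay at a distance bounded below, uniformly in $\eps$, from $\partial\Om$ and from one another; this is the only claim that needs more than the convergence of a single sequence, and I would establish it by contradiction. If there were $\eps_k\to0$ and minimisers $a^{\eps_k}$ with $\min_i d_i(a^{\eps_k})\to0$, with $d_i$ as in \eqref{dconi}, compactness would give a subsequence converging to some $\bar a\in\overline\Om{}^n$; since the conditions defining the degenerate set are closed and $d_i$ is continuous, $\bar a$ would belong to that set, so $\cF(\bar a)=+\infty$, contradicting that $\bar a$ must minimise $\cF$. Given Theorem~\ref{thGcn}, the corollary is therefore essentially formal, and the main---though routine---obstacle is the technical verification of lower semicontinuity and coercivity ensuring existence of minimisers of $\cE_\eps$ at fixed small $\eps$; the only genuinely new step beyond convergence is the compactness argument for the uniform bound on $d_i$ just sketched.
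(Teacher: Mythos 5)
Your proposal is correct and follows essentially the same route as the paper, which (referring to \cite{LMSZ} for details) states that Corollaries \ref{confino} and \ref{confinon} ``rely on the relationship between $\Gamma$-convergence and continuous convergence'': you invoke exactly this equivalence, combine it with compactness of $\overline\Om{}^n$ and the structure of the limit functional $\cF$ from Theorem \ref{thGcn} (finite interior minimum, $+\infty$ on boundary/collision configurations), and obtain convergence of minimisers plus the uniform separation bound by the standard compactness--contradiction argument. The only point left as routine, the lower semicontinuity of $a\mapsto\cE_\eps(a_1,\ldots,a_n)$ ensuring existence of minimisers at fixed $\eps$, is likewise deferred to \cite{LMSZ} by the paper itself.
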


The proofs of Theorem \ref{thGc}, Corollary \ref{confino}, Theorem \ref{thGcn}, and Corollary \ref{confinon} are extremely technical and beyond the scopes of this note.
To prove Theorem \ref{thGc} some work on harmonic function is needed, as well as the proof that the individual contributions to the energies $\cF_\eps$ convergence to the analogous contribution to the energy $\cF$ defined in \eqref{Gammalimit}.
The proofs of Corollaries \ref{confino} and \ref{confinon} rely on the relationship between $\Gamma$-convergence and continuous convergence.
Finally, the proof of Theorem \ref{thGcn} requires an iterative procedure to treat dislocations that get too close to one another, showing that it is energetically favourable to spread apart.
We refer the interested reader to the manuscript \cite{LMSZ} for the details.

\newpage
\section{CONCLUSIONS}\label{conclusions}
The results presented here concern the dynamics and the equilibrium configuration of screw dislocations un two different contexts, that where the domain has a free boundary, dealt with in Section \ref{tom}, and that where the domain is constrained by a Dirichlet-type boundary condition, dealt with in Section \ref{lmsz}.
Even though both types of results are somewhat known in the literature (especially in the engineering one), we stress that here for the first time quantitative characterisations of the well-known qualitative properties have been provided, and, to the best of our knowledge, this is the first time where such precise estimates were obtained.

Some comments are in order on some choices made in the presentation:
\begin{itemize}
\item in Section \ref{tom}, we dealt with unconstrained dynamics which is the crucial first step towards considering more realistic choices of mobility, such as enforcing \emph{glide directions} \cite{CG,BFLM} or other more general nonlinear \emph{mobilities} \cite{hudson}.
The mobility function usually intervenes in formula \eqref{500}; here we have taken it equal to the identity.
Various suggestions for possible mobility functions can be found in \cite{CG}.
For a specific choice of the mobility, \eqref{500} takes the form of a differential inclusion, and was studied both in \cite{BFLM} to obtain existence and uniqueness results, and in \cite{BvMM15} from the point of view of gradient flows.
\item the choice made in Section \ref{lmsz} to take all the Burgers moduli with the same sign is a simplification introduced in our model, but we expect that the results that we obtain also hold if dislocations with opposite Burgers vectors are allowed.
\end{itemize}

Among the natural steps to follow to continue the research in this direction there are interesting and current research themes such as including thermal effects in the model, or upscaling the number of dislocations $n\to\infty$ to see if plastic behaviours such as that obtained in \cite{GLP} can be recovered.

\end{document}